\theoremstyle{plain}
\newtheorem{thm}{Theorem}
\newtheorem{cl}[thm]{Claim}
\theoremstyle{definition}
\def\final{0}  % set this to 1 to get a comment-free version
\def\iflong{\iffalse}
\newcommand{\knote}[1]{{\color{red}[{\tiny \textbf{Krist\'of:} \bf #1}]\marginpar{\color{red}*}}}
\newcommand{\gnote}[1]{{\color{blue}[{\tiny \textbf{Gergő:} \bf #1}]\marginpar{\color{blue}*}}}
\newcommand{\tnote}[1]{{\color{green}[{\tiny \textbf{Tam\'as:} \bf #1}]\marginpar{\color{green}*}}}
\newcommand{\knote}[1]{}
\newcommand{\gnote}[1]{}
\newcommand{\tnote}[1]{}
\newcommand{\cI}{\mathcal{I}}
\newcommand{\cJ}{\mathcal{J}}
\newtcolorbox{probbox}{arc=6pt,
                      colback=white!100,
                      colframe=black!50,
                      before skip=6pt,
                      after skip=6pt,
                      boxsep=1pt,
                      left=6pt,
                      right=6pt,
                      top=4pt,
                      bottom=4pt}
\newcommand{\searchprob}[3]{
   \begin{center}%  
    \begin{minipage}{\linewidth}%
      \begin{probbox}
      \textsc{#1}\\[0.2ex]
      \textbf{Input:} #2\\[0.2ex]
      \textbf{Goal:} #3
      \end{probbox}
    \end{minipage}%
  \end{center}
}
\date{}
\title{On the complexity of packing rainbow spanning trees}
\author{Kristóf Bérczi}
\author{Gergely Csáji}
\author{Tamás Király}
\affil{{\footnotesize MTA-ELTE Momentum Matroid Optimization Research Group and MTA-ELTE Egerv\'ary Research Group, Department of Operations Research, E\"otv\"os Lor\'and University, Budapest, Hungary.\\ Email: \texttt{kristof.berczi@ttk.elte.hu, csajigergely@student.elte.hu, tamas.kiraly@ttk.elte.hu}.}}
\begin{document}
\maketitle
%%%%%%%%%%%%%%%%%%%%%%%%

\begin{abstract}
One of the most important questions in matroid optimization is to find disjoint common bases of two matroids. The significance of the problem is well-illustrated by the long list of conjectures that can be formulated as special cases. B\'erczi and Schwarcz showed that the problem is hard in general, therefore identifying the borderline between tractable and intractable instances is of interest. 

In the present paper, we study the special case when one of the matroids is a partition matroid while the other one is a graphic matroid. This setting is equivalent to the problem of packing rainbow spanning trees, an extension of the problem of packing arborescences in directed graphs which was answered by Edmonds' seminal result on disjoint arborescences. We complement his result by showing that it is NP-complete to decide whether an edge-colored graph contains two disjoint rainbow spanning trees. Our complexity result holds even for the very special case when the graph is the union of two spanning trees and each color class contains exactly two edges. As a corollary, we give a negative answer to a question on the decomposition of oriented $k$-partition-connected digraphs.

\medskip

\noindent \textbf{Keywords:} Common bases, Common generators, Complexity, Matroids, Packing problems, Rainbow spanning trees
\end{abstract}

%%%%%%%%%%%%%%%%
\section{Introduction}
\label{sec:intro}
%%%%%%%%%%%%%%%%

The \emph{covering number} of a matroid $M$, denoted by $\beta(M)$, is the minimum number of independent sets needed to cover its ground set. The matroid partition theorem of Edmonds and Fulkerson~\cite{edmonds1965transversals} implies that $\beta(M)=\max\{\lceil |X|/r(X)\rceil:\ \emptyset\neq X\subseteq S\}$. Analogously, given two matroids $M_1=(S,r_1)$ and $M_2=(S,r_2)$, the \emph{covering number $\beta(M_1,M_2)$ of their intersection} is the minimum number of common independent sets needed to cover $S$. It is not difficult to see that $\beta(M_1,M_2)\leq\beta(M_1)\cdot\beta(M_2)$ holds, but this gives a very weak upper bound on the covering number of the intersection. Thus, a central problem of matroid theory is to find upper bounds on $\beta(M_1,M_2)$ in terms of $\beta(M_1)$ and $\beta(M_2)$. 

By replacing one of the matroids by a general simplicial complex, Aharoni and Berger~\cite{aharoni2006intersection} provided a generalization of Edmonds' matroid intersection theorem. As an application, they showed that $\beta(M_1,M_2)\leq 2\max\{\beta(M_1),\allowbreak \beta(M_2)\}$, that is, the covering number of the intersection is at most twice the maximum of the covering numbers of the matroids. Establishing a constant multiplicative gap between the lower and the upper bounds on the covering number of the intersection of matroids was a significant milestone. Nevertheless, no example is known for which the true value would be even close to the upper bound. In fact, Aharoni and Berger conjectured that \emph{if $M_1$ and $M_2$ are loopless matroids on the same ground set, then $\beta(M_1,M_2)\leq\max\{\beta(M_1),\beta(M_2)\}+1$}. The conjecture was shown to be true when $\max\{\beta(M_1),\beta(M_2)\}\leq 2$~\cite{aharoni2012edge}, and no counterexample is known even for the stronger statement $\beta(M_1,M_2)\leq\max\{\beta(M_1),\beta(M_2)+1\}$. 

A packing counterpart of the problem of covering by independent sets is to find disjoint common bases of two matroids. The \emph{packing number} of a matroid $M$, denoted by $\gamma(M)$, is the maximum number of its pairwise disjoint bases. The matroid partition theorem of Edmonds and Fulkerson~\cite{edmonds1965transversals} implies $\gamma(M)=\min\{\lfloor |S-X|/(r(S)-r(X))\rfloor:\ X\subseteq S,r(X)<r(S)\}$. Given two matroids $M_1=(S,r_1)$ and $M_2=(S,r_2)$, the \emph{packing number $\gamma(M_1,M_2)$ of their intersection} is the maximum number of pairwise disjoint common bases. An easy upper bound for the packing number of the intersection is $\gamma(M_1,M_2)\leq\min\{\gamma(M_1),\gamma(M_2)\}$. However, unlike in the case of the covering number, no good bounds are known from the opposite direction, and giving lower bounds for $\gamma(M_1,M_2)$ in terms of $\gamma(M_1)$ and $\gamma(M_2)$ is an intriguing open problem. To be more precise, it might easily happen that the two matroids have no common bases at all; an easy example is when each element is a loop in at least one of them. Therefore, it is common to concentrate on instances where the ground set partitions into bases in both matroids.

It is worth mentioning that, by a result of Harvey, Kir\'aly, and Lau~\cite{harvey2011disjoint}, one of the matroids can be assumed to be a partition matroid, and Edmonds' matroid intersection theorem~\cite{edmonds1970submodular} implies that $\gamma(M_1,M_2)\geq 1$. Showing the existence of two common bases is already challenging, and it requires non-trivial ideas even in special cases such as proper edge-colorings of complete graphs \cite{brualdi1996multicolored} or Rota's conjecture \cite{geelen2007rota}. An analogous finding for three common bases would be a landmark result towards answering Woodall's conjecture \cite{woodall1978menger}. In \cite{aharoni2006intersection}, Aharoni and Berger considered a relaxation of the problem in which disjoint common generators are required instead of bases, and showed that there always exist $\lfloor \min\{\gamma(M_1),\gamma(M_2)\}/2 \rfloor$ pairwise disjoint common generators of $M_1$ and $M_2$.

%%%%%%%%%%%%%%%%
\paragraph{Previous work.}
%%%%%%%%%%%%%%%%

B\'erczi and Schwarcz~\cite{berczi2021complexity} proved that there is no algorithm which decides if the ground set of two matroids can be partitioned into common bases by using a polynomial number of independence queries. Their result implies that determining the exact values of $\beta(M_1,M_2)$ and $\gamma(M_1,M_2)$ for two matroids is hard under the rank oracle model. Nevertheless, the hardness of the abstract problem has no implications on the complexity of its special cases.

A particularly well-investigated special case of packing common bases is the intersection of the graphic matroid of a complete graph $K_n$ on $n$ vertices and a partition matroid. By thinking of the partition classes as color classes, this problem can also be interpreted as finding disjoint rainbow spanning trees of an edge-colored complete graph. Here, a spanning tree is called rainbow colored if no two of its elements have the same color. Brualdi and Hollingsworth \cite{brualdi1996multicolored} conjectured that if $k\geq 3$ and each color class forms a perfect matching, then the edge set of the complete graph $K_{2k}$ can be partitioned into rainbow spanning trees. A strengthening was proposed by Kaneko, Kano, and Suzuki~\cite{kaneko2003three}, stating that for any proper edge-coloring of $K_n$ with $n\geq 5$, there exists $\lfloor n/2\rfloor$ disjoint rainbow spanning trees. Constantine~\cite{constantine2004edge} suggested that the spanning trees in the Brualdi-Hollingsworth conjecture can be chosen to be isomorphic to each other. Recently, Glock, K\"uhn, Montgomery, and Osthus~\cite{glock2021decompositions} verified the conjecture as well as its strengthening by Constantine for large enough values of $k$. 

The existence of disjoint rainbow spanning trees was also considered for not necessarily proper edge-colorings. Akbari and Alipour \cite{akbari2007multicolored} showed that each $K_n$ that is edge-colored such that no color appears more than $n/2$ times contains at least two disjoint rainbow spanning trees. Under the same assumption, Carraher, Hartke, and Horn~\cite{carraher2016edge} verified the existence of $\Omega(n/\log n)$ disjoint rainbow spanning trees. 

In contrast to the extensive list of results on complete graphs, not much is known for general graphs. In the past years, arborescence packing problems have seen renewed interest due to the fact that branchings form the common independent sets of a graphic matroid and a partition matroid, hence all positive results characterize algorithmically tractable instances of the problem of packing common bases. A milestone result of this area is Edmonds' arborescences theorem~\cite{edmonds1973edge}, characterizing the existence of $k$ pairwise disjoint arborescences in a directed graph. B\'erczi and Schwarcz~\cite{berczi2022rainbow} studied rainbow circuit-free colorings of binary matroids in general, and showed that if an $n$-element rank $r$ binary matroid $M$ is colored with exactly $r$ colors, then $M$ either contains a rainbow colored circuit or a monochromatic cocircuit. Such a coloring can be identified with a so-called reduction to a partition matroid, which is closely related to the problem of packing rainbow spanning trees. For further details on recent developements, see e.g.~\cite{horn2017many,fu2018number,horn2018rainbow,montgomery2019decompositions,pokrovskiy2018linearly}.

%%%%%%%%%%%%%%%%
\paragraph{Our results.}
%%%%%%%%%%%%%%%%

Despite impressive achievements, the complexity of finding disjoint rainbow spanning trees remained an intriguing open question that was raised by many, see e.g.~\cite{overflow,berczi2022rainbow}.

\searchprob{Packing Rainbow Spanning Trees}{Edge-colored graph $G=(V,E)$ and positive integer $k\in\mathbb{Z}_+$.}{Find $k$ pairwise disjoint rainbow spanning trees.}

Our first main result is a proof showing that {\sc Packing Rainbow Spanning Trees} is NP-complete even when $k=2$, the graph is the union of two spanning trees, and each color class contains exactly two edges\footnote{Just before the submission of the present paper, H\"orsch, Kaiser, and Kriesell~\cite{horsch2022rainbow} published an independent work that considers analogous problems.}. Therefore, determining $\beta(M_1,M_2)$ and $\gamma(M_1,M_2)$ is hard even when $M_1$ is a graphic matroid and $M_2$ is a partition matroid. 

As a corollary, we give a negative answer to a problem appeared in~\cite{egres}. A graph $G=(V,E)$ is called \emph{$k$-partition-connected} if for every partition $\mathcal{P}$ of $V$ there are at least $k(|\mathcal{P}|-1)$ edges in $E$ that connect different classes of $\mathcal{P}$. Tutte~\cite{tutte1961problem} showed that this is equivalent to the property that $G$ can be decomposed into $k$ connected spanning subgraphs. 

\searchprob{Decomposition of $k$-partition-connected Digraphs}{Digraph $D=(V,A)$ whose underlying graph is $k$-partition-connected in which all but one vertex have in-degree at least $k$, the remaining vertex $r\in V$ has in-degree $0$.}{Find $k$ weakly connected spanning subgraphs of $D$ so that every vertex $v\in V-r$ has positive in-degree in each of them.}

The problem can be rephrased using matroid terminology as follows. Let $M_1$ be the graphic matroid of the underlying undirected graph of $D$, and let $M_2$ be the partition matroid where each class consists of the set of edges entering a given vertex $v\in V-r$. Is it true that if both $M_1$ and $M_2$ contain $k$ disjoint spanning sets, then there are $k$ disjoint common spanning sets? When the degree of each vertex except $r$ is exactly $k$ then the digraph is in fact rooted $k$-edge-connected, and the existence of the subgraphs is question follows from Edmonds' disjoint arborescences theorem~\cite{edmonds1973edge}. Unfortunately, this seems to be the only tractable case, as we show that deciding the existence of $k$ subgraphs satisfying the conditions of the problem is NP-complete.

Given a matroid $M=(E,\mathcal{I})$ whose ground set is partitioned into two-element subsets called \emph{pairs}, a set $X\subseteq E$ is called a \emph{parity set} if it is the union of pairs. The \emph{matroid parity problem}, introduced by Lawler~\cite{lawler1976combinatorial}, asks for a parity independent set of maximum size. Though matroid parity cannot be solved efficiently in general matroids~\cite{jensen1982complexity,lovasz1981matroid}, Lovász~\cite{lovasz1981matroid} developed a polynomial time algorithm for linear matroids that is applicable if a linear representation is available. The graphic matroid of a graph is linear whose representation is easy to construct, therefore the following arises naturally.

\searchprob{Packing Parity Spanning Trees}{Graph $G=(V,E)$ whose edges are partitioned into pairs and positive integer $k\in\mathbb{Z}_+$.}{Find $k$ pairwise disjoint parity spanning trees.}

We prove hardness of {\sc Packing Parity Spanning Trees} by reduction from {\sc Packing Rainbow Spanning Trees}. Interestingly, the direction of the reduction between the two problems is just the opposite of the one appearing in \cite{berczi2021complexity} where the hardness of packing disjoint common bases was proved by reduction from packing parity bases. 

\paragraph{Paper organization.}

The rest of the paper is organized as follows. Basic definitions and notation are given in Section~\ref{sec:prelim}. The complexity of {\sc Packing Rainbow Spanning Trees} is discussed in Section~\ref{sec:rainbow}. Finally, we prove hardness of {\sc Decomposition of k-partition-connected digraphs} in Section~\ref{sec:kpart} and of {\sc Packing Parity Spanning Trees} in Section~\ref{sec:parity}.

%%%%%%%%%%%%%%%%%%%%%%%%
\section{Preliminaries}
\label{sec:prelim}
%%%%%%%%%%%%%%%%%%%%%%%%

\paragraph{General notation.} 
We denote the set of non-negative integers by $\mathbb{Z}_+$. For a positive integer $k$, we use $[k]\coloneqq \{1,\dots,k\}$. Given a ground set $E$ together with subsets $X,Y\subseteq E$, the \emph{difference} of $X$ and $Y$ is denoted by $X-Y$. If $Y$ consists of a single element $y$, then $X-\{y\}$ and $X\cup\{y\}$ are abbreviated as $X-y$ and $X+y$, respectively.

Let $G=(V,E)$ be a
%n undirected or 
directed graph.
%and $X\subseteq V$ be a subset of vertices. We denote the \emph{subgraph of $G$ spanned by $X$} by $G[X]$. If $G$ is undirected, then the \emph{set of edges incident to a vertex $v\in V$} is denoted by $\delta(v)$, while the \emph{degree of $v$} is $d(v)\coloneqq |\delta(v)|$. If $G$ is directed, then 
The \emph{set of edges entering a vertex $v\in V$} is denoted by $\delta^{in}_G(v)$.
%, while the \emph{in-degree of $v$} is $d^{in}_G(v)\coloneqq |\delta^{in}(v)|$. 
Note that 
%both $\delta(v)$ and 
$\delta^{in}(v)$ contains, if exist, the loops at $v$. 
%The graphs obtained by deleting $X\subseteq V$ or $F\subseteq E$ are denoted by $G-X$ and $G-F$, respectively.
The graph is \emph{weakly connected} if its underlying undirected graph is connected.

\paragraph{Matroids.}

Although the results are presented using graph terminology, we give a brief introduction into matroid theory; for further details, the interested reader is referred to \cite{oxley2011matroid}. Matroids were introduced independently by Whitney \cite{whitney1935abstract} and by Nakasawa \cite{nishimura2009lost}. A \emph{matroid} $M=(E,\cI)$ is defined by its \emph{ground set} $E$ and its \emph{family of independent sets} $\cI\subseteq 2^S$ that satisfies the \emph{independence axioms}: (I1) $\emptyset\in\cI$, (I2) $X\subseteq Y,\ Y\in\cI\Rightarrow X\in\cI$, and (I3) $X,Y\in\cI,\ |X|<|Y|\Rightarrow\exists e\in Y-X\ s.t.\ X+e\in\cI$. The maximal independent subsets of $E$ are called \emph{bases}. A set $Z\subseteq E$ is a \emph{spanning set} of $M$ if it contains a basis of the matroid. 

A \emph{partition matroid} is a matroid $N=(S,\cJ)$ such that $\cJ=\{X\subseteq S:\ |X\cap S_i|\leq 1\ \text{for $i=1,\dots,q$}\}$ for some partition $S=S_1\cup\dots\cup S_q$.\footnote{In general, the upper bounds might be different for the different partition classes. As all the partition matroids used in the paper have all-ones upper bounds, we make this restriction without explicitly mentioning it.} For a graph $G=(V,E)$, the \emph{graphic matroid} $M=(E,\cI)$ of $G$ is defined as $\cI=\{F\subseteq E:\ F\ \text{does not contain a cycle}\}$, that is, a subset $F\subseteq E$ is independent if it is a forest.

\paragraph{Rainbow spanning trees.}

Let $G=(V,E)$ be an edge-colored graph. A subset $F\subseteq E$ of edges is \emph{rainbow colored} if each color appears at most once in it. For short, we call a rainbow colored spanning tree of $G$ a \emph{rainbow spanning tree}. Let $M_1$ denote the graphic matroid of $G$ and $M_2$ be the partition matroid defined by the color classes of the coloring. Then the rainbow forests correspond to the common independent sets of $M_1$ and $M_2$. Therefore, the existence of a single rainbow spanning tree is characterized by Edmonds' matroid intersection theorem~\cite{edmonds1970submodular}, which gives the following: there exists a rainbow spanning tree in $G$ if and only if for any partition $\mathcal{P}$ of $V$, the edges going between different classes of $\mathcal{P}$ use at least
$|\mathcal{P}|-1$ colors.

%%%%%%%%%%%%%%%%%%%%%%%%
\section{Packing rainbow spanning trees}
\label{sec:rainbow}
%%%%%%%%%%%%%%%%%%%%%%%%

This section is devoted to the proof of the main result of the paper, the hardness of {\sc Packing Rainbow Spanning Trees}. The proof is by reduction from Monotone Not-All-Equal 3-Sat in which each variable appears exactly four times. In such a problem, we are given a CNF formula in which no negated variable appears, all clauses contain exactly three distinct variables, and each variable appears exactly four times, and the goal is to decide whether there is a truth assignment such that for each clause at least one literal evaluates to true and at least one to false, respectively. This problem is known to be NP-complete, see e.g.~\cite{porschen2014xsat,darmann2019simplified}.

\begin{thm}\label{thm:rainbow}
{\sc Packing Rainbow Spanning Trees} is NP-complete even when $k=2$, the graph is the union of two spanning trees, and each color class contains exactly two edges.
\end{thm}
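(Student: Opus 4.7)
The plan is to reduce from Monotone Not-All-Equal 3-SAT in which each variable appears exactly four times, a known NP-complete problem. Given such an instance with variables $x_1,\ldots,x_n$ and clauses $C_1,\ldots,C_m$ (with $3m=4n$), I will construct an edge-colored graph $G=(V,E)$, with a partition of $E$ into color classes of size exactly 2, that is a union of two spanning trees, and for which two edge-disjoint rainbow spanning trees exist if and only if the formula is NAE-satisfiable. Since $|E|=2(|V|-1)$ and there are $|V|-1$ color classes of size 2, any pair of disjoint rainbow spanning trees must partition $E$ with each color class contributing exactly one edge to each tree; thus the problem reduces to choosing, for each color class, which of its two edges lies in $T_1$ (the other in $T_2$), subject to both $T_1$ and $T_2$ being spanning trees.

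The reduction relies on two elementary local constraints. A degree-2 vertex whose two incident edges lie in distinct color classes forces those edges into different trees (otherwise one tree would fail to cover the vertex); this is an XOR constraint between the two color classes. A degree-3 vertex whose three incident edges lie in pairwise distinct color classes forces that not all three lie in a common tree; this is an NAE-3 constraint. The clause gadget for $C_j$ is a single degree-3 vertex $c_j$ whose three incident edges are the occurrence edges of the three variables of $C_j$, each in its own color class; the NAE-3 constraint at $c_j$ encodes NAE satisfaction of the clause directly. The variable gadget for $x_i$ introduces one color class $z_{i,k}=\{o_{i,k},\ell_{i,k}\}$ for each occurrence (pairing the occurrence edge $o_{i,k}$ with a \emph{linker} edge $\ell_{i,k}$), and cycles four XOR constraints to force $z_{i,1}=\cdots=z_{i,4}$: a single degree-2 vertex with $o_{i,k}$ and $\ell_{i,k+1}$ forces the equality $z_{i,k}=z_{i,k+1}$ once the color-class pairing is unwound (the linker is in the opposite tree to the occurrence of its color), so four such vertices arranged cyclically propagate to all four occurrences. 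The common value of $z_{i,1},\ldots,z_{i,4}$ then encodes the truth value of $x_i$ and places all four occurrence edges of $x_i$ in a single tree.

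Correctness follows in the forward direction by placing $o_{i,k}\in T_1$ iff $\phi(x_i)=T$, propagating through the XOR constraints of the variable gadgets, and checking that $T_1,T_2$ are spanning trees and each NAE clause is satisfied; in the backward direction, by reading $\phi(x_i)$ off from which tree contains the four (forced-to-agree) occurrences $o_{i,k}$ and invoking the NAE-3 constraint at each $c_j$. The main obstacle is the careful design of the variable gadget: it must simultaneously (i) enforce $z_{i,1}=\cdots=z_{i,4}$, (ii) admit both truth values of $x_i$ as valid configurations, (iii) fit into the global 2-spanning-tree structure without spurious inter-variable constraints, and (iv) manage the endpoints of linker and any auxiliary edges so that every vertex has degree at least 2 and the chain vertices themselves remain of degree exactly 2. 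Balancing these requirements, and verifying via Nash-Williams' theorem (or by explicitly exhibiting two spanning trees) that the resulting graph $G$ is indeed the union of two spanning trees with all color classes of size 2, is where the bulk of the technical work of the proof lies.
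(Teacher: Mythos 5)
Your high-level strategy matches the paper's: both reduce from Monotone NAE-3-SAT with each variable occurring four times, both exploit the counting observation that $|E|=2(|V|-1)$ with size-2 color classes forces each class to contribute exactly one edge to each tree, and both encode the clause condition as an NAE constraint coming from the fact that each spanning tree must reach the clause gadget, and the variable condition as a chain of forced equalities among the four occurrence edges. However, your proposal stops exactly where the actual proof begins: you never construct the variable gadget, and you explicitly defer the ``bulk of the technical work'' of making the gadgets coexist with the global two-spanning-tree structure. This is not a routine verification. The local degree arguments you give only ensure that each tree \emph{touches} every vertex; together with the edge count this does not make $T_1$ and $T_2$ connected, and connectivity is precisely what fails in the natural completion of your sketch. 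Concretely: if a clause vertex $c_j$ has degree $3$ and exactly one of its incident occurrence edges, say $o$, lies in $T_1$, and the other endpoint of $o$ is one of your degree-2 chain vertices $d$ whose second edge is forced into $T_2$ by the XOR constraint, then $\{c_j,d\}$ with the single edge $o$ is an isolated component of $T_1$. So a single degree-3 clause vertex fed directly by chain vertices cannot work; the gadget must provide an alternative route from each clause gadget to the rest of the graph inside \emph{both} trees.

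The paper's construction is engineered around exactly this difficulty: each clause is a triangle (so a clause vertex whose own occurrence edge lands in the wrong tree can still reach the root through a neighboring clause vertex whose occurrence edge is in the right tree --- this is where ``at least one true literal'' is used for $T_1$ and ``at least one false literal'' for $T_2$), each occurrence of a variable gets a full $K_4$ whose only 2-spanning-tree decompositions are pairs of Hamiltonian paths (this rigidity, combined with the coloring, is what propagates the truth value between consecutive occurrences), and a root $r$ joined to every variable gadget by parallel edges guarantees that the variable gadgets are anchored in both trees. Your XOR-chain idea for propagating the variable's value is logically sound in isolation, but until you exhibit concrete gadgets, specify the far endpoints of the linker edges, and prove that both resulting edge sets are connected (and that $G$ itself decomposes into two spanning trees), the proof is incomplete in an essential way rather than merely unpolished.
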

\begin{proof}
Let $\Phi=(U,\mathcal{C})$ be an instance of Monotone Not-All-Equal 3-Sat where $U=\{x_1,\dots,x_n\}$ is the set of variables and $\mathcal{C}=\{C_1,\dots,C_m\}$ is the set of clauses, and each variable $x_i$ is contained in exactly four members of $\mathcal{C}$. We construct an instance of {\sc Packing Rainbow Spanning Trees} as follows.

For each variable $x_i$, we add a complete graph on vertices $\{u^i_p,v^i_p,w^i_p,z^i_p\}$ to $G$ for $p\in[4]$, that is, for each variable four complete graphs on four vertices are added. For each clause $C_j$, we add a triangle on vertices $\{c^j_1,c^j_2,c^j_3\}$. Assume that $C_j$ contains variables $x_{i_1}$, $x_{i_2}$ and $x_{i_3}$. If $x_{i_q}$ is the $\ell$th occurrence of the variable $x_{i_q}$ with respect to the ordering of the clauses, then we add an edge between $z^{i_q}_\ell$ and $c^j_q$ for $q\in[3]$. Finally, we add an extra vertex $r$ and connect it to $u^i_\ell$ with two parallel edges for $i\in[n]$, $\ell\in[4]$; see Figure~\ref{fig:rainbow} for an example. Let $G=(V,E)$ denote the graph thus arising. Note that the number of vertices is $|V|=16\cdot n+3\cdot m+1$, while the number of edges is $|E|=32\cdot n+6\cdot m$. It is not difficult to check that the edge set of $G$ can be decomposed into two spanning trees. 

\begin{figure}[t]
    \centering
    \includegraphics[width=\textwidth]{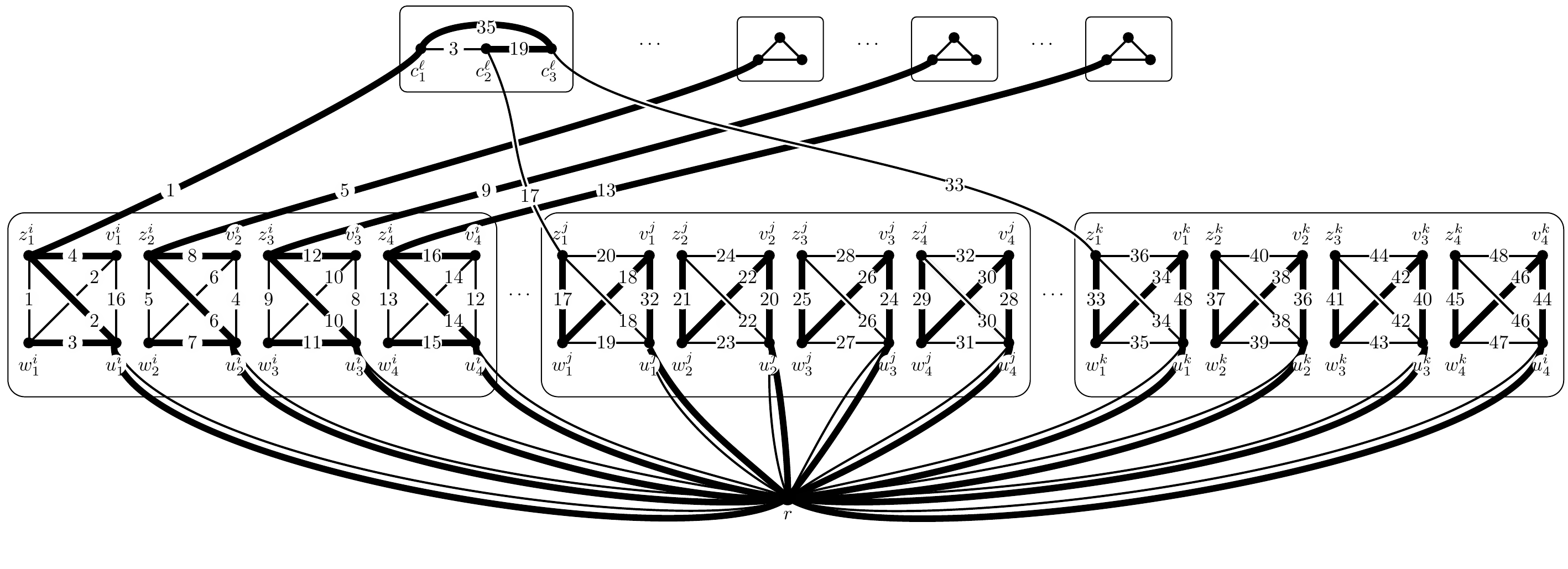}
    \caption{An illustration of the proof of Theorem~\ref{thm:rainbow}. Clause $C_\ell$ contains variables $x_i$, $x_j$ and $x_k$. Relevant color classes are represented by numbers. Thick and thin edges correspond to rainbow spanning trees $T_1$ and $T_2$, respectively, when the value of $x_i$ is set to true and the values of $x_j$ and $x_k$ are set to false.}
    \label{fig:rainbow}
\end{figure}

Now we define an edge-coloring of $G$ in which each color class consists of exactly two edges. The pairs of parallel edges leaving $r$ form distinct color classes. Consider a variable $x_i$, and let $C_{j_1}$, $C_{j_2}$, $C_{j_3}$ and $C_{j_4}$ be the clauses containing $x_i$. Furthermore, let $c^{j_p}_{q_p}$ be the neighbor of $z^i_p$ in the triangle corresponding to clause $C_{j_p}$ for $p\in[4]$. Then the coloring contains the pairs $\{w^i_pz^i_p,z^i_pc^{j_p}_{q_p}\}$, $\{v^i_pz^i_p,u^i_{p+1}v^i_{p+1}\}$, $\{u^i_{p}z^i_{p},v^i_{p}w^i_{p}\}$ and $\{u^i_pw^i_p,c^{j_p}_{q_p}c^{j_p}_{q_p+1}\}$ as color classes, where indices are meant in a cyclic order. It is worth mentioning that among these, only the first two pairs play a role in the reduction, and the remaining two are added only to ensure that each color class has size exactly two. 

We claim that $\Phi$ has a not-all-equal truth assignment if and only if the edge-set of $G$ can be partitioned into two rainbow spanning trees. To see this, we prove the two directions separately.

\begin{cl}\label{cl:1}
If $\Phi$ has a not-all-equal truth assignment, then the edge-set of $G$ can be partitioned into two rainbow spanning trees.
\end{cl}
\begin{proof}
Take an assignment of truth values to the variables such that each clause contains at least one true and one false variable. We construct two spanning trees $T_1$ and $T_2$ as follows. 

Obviously, for each pair of parallel edges leaving $r$, we add one of them to $T_1$. Let $x_i$ be a variable, $C_{j_1}$, $C_{j_2}$, $C_{j_3}$ and $C_{j_4}$ be the clauses containing $x_i$, and $c^{j_p}_{q_p}$ be the neighbor of $z^i_p$ in the triangle corresponding to clause $C_{j_p}$ for $p\in[4]$. If $x_i$ is a true variable, then we add the edges $u^i_pw^i_p$, $u^i_pz^i_p$, $v^i_pz^i_p$ and $z^i_pc^{j_p}_{q_p}$ to $T_1$ for $p\in[4]$. If $x_i$ is a false variable, then we add the edges $u^i_pv^i_p$, $v^i_pw^i_p$, $w^i_pz^i_p$, and $c^{j_p}_{q_p}c^{j_p}_{q_p+1}$ to $T_1$ for $p\in[4]$. Finally, define $T_2\coloneqq E-T_1$.

Observe that both $T_1$ and $T_2$ are rainbow colored by definition, hence it remains to show that both $T_1$ and $T_2$ are spanning trees. As each clause of $\Phi$ contains three variables and each variable is contained in exactly four clauses, we have $4\cdot n=3\cdot m$. Since $|T_i|=20\cdot n=16\cdot n+3\cdot m=|V|-1$, it suffices to show that $T_i$ forms a connected graph on $V$ for $i=1,2$, or equivalently, for any vertex $v\in V$, there exists a $v-r$ path in $T_i$. We show this for $T_1$; the proof for $T_2$ goes similarly.

The statement clearly holds for vertices of the $K_4$ subgraphs corresponding to the variables. Let $C_j$ be a clause and, for $q\in[3]$, let $x_{i_q}$ be the variable in $C_j$ such that $c^j_q$ has a neighbour in one of the complete graphs on four vertices corresponding to variable $x_{i_q}$, denoted by $z^{i_q}_{p_q}$. If $x_{i_q}$ is a true variable, then the edge $z^{i_q}_{p_q}c^j_q$ is in $T_1$, hence $c^j_q$ can reach $r$ in $T_1$. If $x_{i_q}$ is a false variable, then the edge $c^j_qc^j_{q+1}$ is in $T_1$. Now if $x_{i_{q+1}}$ is a true variable, then the edge $z^{i_{q+1}}_{p_{q+1}}c^j_{q+1}$ is in $T_1$, hence $c^j_q$ can reach $r$ in $T_1$ through $c^j_{q+1}$. Otherwise, the edge $c^j_{q+1}c^j_{q+2}$ is in $T_1$. As each clause contains at least one true variable, $x_{i_{q+2}}$ necessarily has true value, and so the edge $z^{i_{q+2}}_{p_{q+2}}c^j_{q+2}$ is in $T_1$, hence $c^j_q$ can reach $r$ in $T_1$ through $c^j_{q+1}$ and then $c^j_{q+2}$. This concludes the proof of the claim.
\end{proof}

\begin{cl}\label{cl:2}
If the edge-set of $G$ can be partitioned into two rainbow spanning trees, then $\Phi$ has a not-all-equal truth assignment.
\end{cl}
\begin{proof}
Take a partitioning of the edge-set of $G$ into two rainbow spanning trees $T_1$ and $T_2$. Let $x_i$ be a variable, $C_{j_1}$, $C_{j_2}$, $C_{j_3}$ and $C_{j_4}$ be the clauses containing $x_i$, and $c^{j_p}_{q_p}$ be the neighbor of $z^i_p$ in the triangle corresponding to clause $C_{j_p}$ for $p\in[4]$. 

We claim that the edges $z^i_pc^{j_p}_{q_p}$ are all contained in the same spanning tree $T_1$ or $T_2$; the truth assignment will be based on this distinction. Without loss of generality, let us assume that $z^i_1c^{j_1}_{q_1}$ is in $T_1$. By the definition of the color classes, this implies that $w^i_1z^i_1$ is in $T_2$. Since $u^i_1z^1_1$ and $v^i_1w^i_1$ have the same color, they are contained in different spanning trees. Any partitioning of a complete graph on four vertices into two spanning trees consists of two paths of length three, hence necessarily $u^i_1v^i_1$ is in $T_2$ and $v^i_1z^i_1$ is in $T_1$. The latter implies that $u^i_2v^i_2$ is in $T_2$ which, using similar arguments, shows that $z^i_2c^{j_2}_{q_2}$ and $v^i_2z^i_2$ are in $T_1$. Continuing this, we get that $u^i_3v^i_3$ is in $T_2$, $z^i_3c^{j_3}_{q_3}$ and $v^i_3z^i_3$ are in $T_1$, $u^i_4v^i_4$ is in $T_2$, $z^i_4c^{j_4}_{q_4}$ and $v^i_4z^i_4$ are in $T_1$. This proves that each of the edges $z^i_pc^{j_p}_{q_p}$ are all contained in the same spanning tree, namely in $T_1$ in this case.

We define a truth assignment as follows. If the edges $z^i_pc^{j_p}_{q_p}$ are contained in $T_1$, then we set the value of $x_i$ to true, otherwise we set it to false. Let $C_j$ be a clause and, for $q\in[3]$, let $x_{i_q}$ be the variable in $C_j$ such that $c^j_q$ has a neighbour in one of the complete graphs on four vertices corresponding to variable $x_{i_q}$, denoted by $z^{i_q}_{p_q}$. As both $T_1$ and $T_2$ are spanning trees, we have $T_i\cap \{c^j_1z^{i_1}_{q_1},c^j_2z^{i_2}_{q_2},c^j_3z^{i_3}_{q_3}\}\neq\emptyset$ for $i=1,2$, meaning that each clause contains at least one true and at least one false variable as required.
\end{proof}

The theorem follows by Claims~\ref{cl:1} and~\ref{cl:2}.
\end{proof}

%%%%%%%%%%%%%%%%%%%%%%%%
\section{Further results}
\label{sec:cor}
%%%%%%%%%%%%%%%%%%%%%%%%

As an application of Theorem~\ref{thm:rainbow}, in this section we prove hardness of {\sc Decomposition of $k$-partition-connected Digraphs} and {\sc Packing Parity Spanning Trees}.

%%%%%%%%%%%%%%%%%%%%%%%%
\subsection{Decomposition of $k$-partition-connected digraphs}
\label{sec:kpart}
%%%%%%%%%%%%%%%%%%%%%%%%

In matroid terms, {\sc Decomposition of $k$-partition-connected Digraphs} aims at finding pairwise disjoint common spanning sets of two matroids, one of them being a graphic matroid while the other is a partition matroid. When the in-degree of each vertex $v\in V-r$ is exactly $k$, then the existence of $k$ pairwise disjoint arborescences rooted at $r$ follows by Edmonds' disjoint arborescences theorem~\cite{edmonds1973edge}, and these arborescences are actually common spanning sets of the underlying two matroids. However, we show that deciding the existence of $k$ disjoint common spanning sets is hard in general.

\begin{thm}\label{thm:decomp}
{\sc Decomposition of $k$-partition-connected Digraphs} is NP-complete even when $k=2$.
\end{thm}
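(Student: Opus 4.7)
The plan is to reduce from {\sc Packing Rainbow Spanning Trees}, using the strong form guaranteed by Theorem~\ref{thm:rainbow}: the input graph $G'=(V',E')$ is the union of two spanning trees, and every color class has exactly two edges. Membership in NP is immediate, since a candidate decomposition can be verified in polynomial time by checking weak connectivity, spanning, and positive in-degree at every non-root vertex.

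From such a $G'$ I would build a digraph $D=(V,A)$ by replacing each color class $c=\{e_1,e_2\}$ with an oriented gadget containing a new color vertex $w_c$ of in-degree exactly two, whose in-neighborhood forms a size-two class of the partition matroid $M_2$ and encodes the choice between $e_1$ and $e_2$. The remaining orientations are chosen so that the root $r$ (inherited from Theorem~\ref{thm:rainbow}'s construction) has in-degree zero, every other vertex has in-degree at least two, and the underlying graph of $D$ is $2$-partition-connected. A natural way to arrange these conditions simultaneously is to fix a decomposition of $G'$ into two spanning trees $T^*_1\cup T^*_2$, orient both trees away from $r$, and lift this orientation through the gadgets; a direct edge count shows $|A|=2(|V|-1)$, so any pair of arc-disjoint spanning sets of the graphic matroid in fact partitions $A$ into two spanning trees.

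For the equivalence, I would prove the forward direction by lifting two disjoint rainbow spanning trees $T_1, T_2$ of $G'$ through the gadgets to two disjoint common spanning sets $S_1, S_2$ of $D$. For the reverse direction, given disjoint common spanning sets $S_1, S_2$ of $D$, the size-two partition class at each $w_c$ forces a one-to-one split of its two in-arcs between $S_1$ and $S_2$, which selects, for each color $c$, one of $e_1, e_2$ for each tree $T_i$. The outstanding step is then to argue that the resulting edge sets $T_i\subseteq E'$ are indeed spanning trees of $G'$ and not merely arbitrary one-representative-per-color selections.

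The hard part will be this last argument: weak connectivity of each $S_i$ in $D$ must translate into connectivity of $T_i$ in $G'$. This is where the color gadget needs to be rigid enough to rule out common spanning sets of $D$ that traverse $w_c$ by combining an in-arc associated with one of $e_1,e_2$ with an out-arc associated with the other; such cross-wired configurations need not correspond to any rainbow spanning tree of $G'$ and would break the correspondence. Engineering the gadget to preclude this while simultaneously maintaining the $2$-partition-connectedness of $D$'s underlying graph and the in-degree at least two condition on the non-root vertices is the principal combinatorial difficulty, and it is likely to be handled by a small gadget that pairs each in-arc of $w_c$ tightly with a unique out-arc via auxiliary arcs whose partition classes provide no freedom in how $S_1$ and $S_2$ split them.
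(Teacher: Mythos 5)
There is a genuine gap: your write-up correctly identifies the reduction source ({\sc Packing Rainbow Spanning Trees} in the strong form of Theorem~\ref{thm:rainbow}) and the general shape of the construction (a vertex $w_c$ per color class), but it explicitly defers the construction of the gadget that makes the equivalence work, calling it ``the principal combinatorial difficulty'' and only speculating that it ``is likely to be handled by a small gadget.'' That gadget \emph{is} the proof; without it you have a plan, not an argument. Worse, the one concrete arithmetic claim you do make undermines the reduction: if $|A|=2(|V|-1)$ while $r$ has in-degree $0$ and every other vertex has in-degree at least $2$, then every non-root vertex has in-degree \emph{exactly} $2$, and the paper points out that this is precisely the case where $2$-partition-connectivity makes $D$ rooted $2$-edge-connected and Edmonds' disjoint arborescences theorem always produces the two subgraphs. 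Every instance your tight construction outputs would be a YES instance, so the reduction could not be correct. A working construction must deliberately leave slack in the in-degrees.

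For comparison, the paper's gadget works as follows. Orient $G$ so that every vertex except $r$ has in-degree exactly $2$, split each $v\neq r$ into $v^{in}_1,v^{in}_2,v^{out}$ joined by two parallel arcs from each $v^{in}_i$ to $v^{out}$ (so $v^{out}$ has in-degree $4$), route the two arcs entering $v$ to $v^{in}_1$ and $v^{in}_2$ respectively, and for each color class $c=\{uv,xy\}$ add $w_c$ with \emph{two loops} (which supply its in-degree) and out-arcs to the two in-copies $v^{in}_p, y^{in}_q$ that receive the images of $uv$ and $xy$. The rigidity you were looking for then comes for free: each $v^{in}_p$ has in-degree exactly $2$ (one image arc, one arc from $w_c$), so requiring positive in-degree in both subgraphs forces those two arcs into different subgraphs; and $w_c$ is attached to the rest of $D$ only by its two out-arcs, so weak connectivity forces one of them into each subgraph. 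Chaining these two constraints forces the two edges of each color class into different $T_i$'s, and connectivity together with $|E|=2|V|-2$ shows each $T_i$ is a rainbow spanning tree. The parallel arcs and loops are exactly the slack that keeps the instance away from the Edmonds-solvable regime while preserving $2$-partition-connectivity.
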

\begin{proof}
Consider an instance of {\sc Packing Rainbow Spanning Trees} where the graph $G=(V,E)$ is the union of two spanning trees, and each color class contains exactly two edges. For ease of discussion, we denote the set of colors by $\mathcal{C}$. We construct an instance $D=(U,F)$ of {\sc Decomposition of $2$-partition-connected Digraphs} as follows.

Let $r\in V$ be an arbitrary vertex of $G$. As $E$ can be decomposed into two spanning trees, $G$ admits an orientation in which each vertex has in-degree exactly two, except $r$ having in-degree $0$. Let $\overrightarrow{G}=(V,\overrightarrow{E})$ denote the digraph obtained by taking such an orientation. For each vertex $v\in V-r$, we add three vertices $v^{in}_1,v^{in}_2,v^{out}$ to $U$. Furthermore, we add a copy of $r$ to $U$ that, by abuse of notation, we denote also by $r$. For $i=1,2$, we add two parallel arcs from $v^{in}_i$ to $v^{out}$ for each vertex $v\in V-r$. Furthermore, if $u_1v$ and $u_2v$ are the two arcs entering $v$ in $\overrightarrow{G}$, then we add the arcs $u^{out}_1v^{in}_1$ and $u^{out}_2v^{in}_2$ to $F$ if $u_1,u_2\neq r$, while if one of them (or even both), say $u_1$, is $r$ then we add the arc $rv^{in}_1$ instead. %This way, the edges of each edge of $G$ has an `image' in $D$.
This way, each edge of $G$ has a corresponding 'image' in $D$.
Finally, if the edges $uv,xy\in E$ formed a color class $c\in\mathcal{C}$ in the original rainbow spanning tree instance and the corresponding arcs in $D$ are $u^{out}v^{in}_p$ and $x^{out}y^{in}_q$ for some $p,q\in[2]$, then we add a vertex $w_c$ to $U$ together with arcs $w_cv^{in}_p,w_cy^{in}_q$ and two loops on $w_c$; see Figure~\ref{fig:decomp} for an example. 

\begin{figure}[t!]
\centering
\begin{subfigure}[t]{0.48\textwidth}
  \centering
  \includegraphics[width=.8\linewidth]{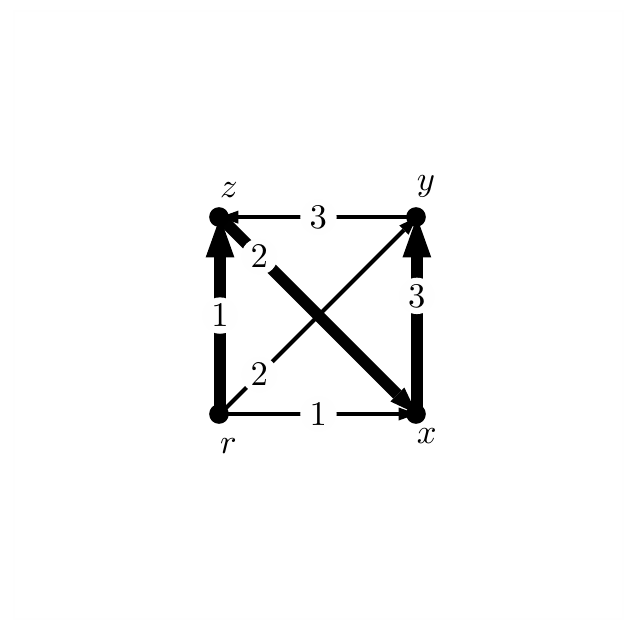}
  \caption{An instance of {\sc Packing Rainbow Spanning Trees}. Color classes are represented by numbers. Thick and thin edges form rainbow spanning trees $T_1$ and $T_2$, respectively. The edges are oriented in such a way that the in-degree of each vertex in $V-r$ is two.}
  \label{fig:dec1}
\end{subfigure}\hfill
\begin{subfigure}[t]{0.48\textwidth}
  \centering
  \includegraphics[width=.8\linewidth]{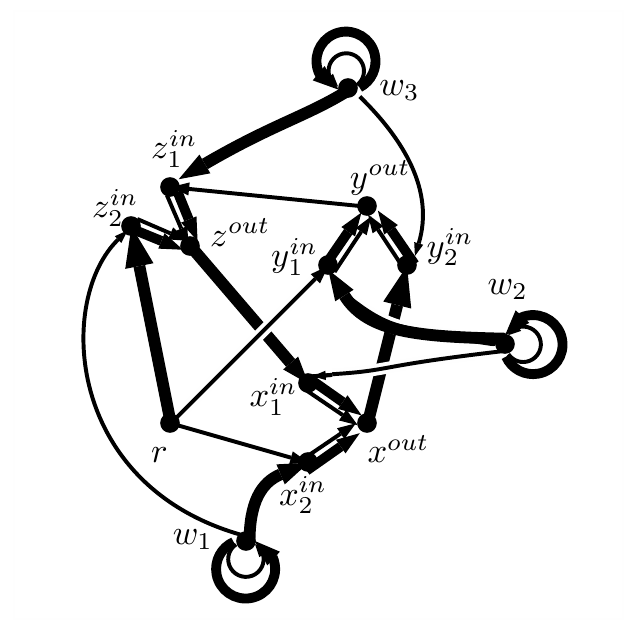}
  \caption{The corresponding {\sc Decomposition of $k$-partition-connected Digraphs} instance $D=(U,A)$. Thick and thin edges form connected subgraphs $F_1$ and $F_2$ with each vertex in $U-r$ having positive in-degree.}
  \label{fig:dec2}
\end{subfigure}
\caption{An illustration of Theorem~\ref{thm:decomp}.}
\label{fig:decomp}
\end{figure}

We claim that the edge-set of $G$ can be decomposed into two rainbow spanning trees if and only if $A$ can be partitioned into two weakly connected spanning subgraphs such that every vertex $u\in U-r$ has positive in-degree in both of them. To see this, we prove the two directions separately.

\begin{cl}\label{cl:3}
If the edge-set of $G$ can be partitioned into two rainbow spanning trees, then $A$ can be partitioned into two weakly connected spanning subgraphs such that every vertex $u\in U-r$ has positive in-degree in both of them.
\end{cl}
\begin{proof}
Take a partitioning of the edge-set of $G$ into two rainbow spanning trees $T_1$ and $T_2$. For $i=1,2$, define $F_i$ as follows. Take the arcs of $D$ corresponding to the edges of $T_i$. Add one arc from each parallel pair going between $v^{in}_i$ and $v^{out}$ for $i=1,2$, $v\in V-r$. For each color class $c\in\mathcal{C}$, add one of the loops on $w_c$ to $F_i$. Finally, since exactly one edge of $c$ appeared in $T_i$, exactly one of the two neighbours of $w_c$ has in-degree zero in $F_i$ so far;  %together with one of the arcs leaving $w_c$. 
add the arc leaving $w_c$ that goes to this vertex. It is easy to check that $F_i$ is a weakly connected 
and each vertex $U-r$ has positive in-degree in it, concluding the proof of the claim. 
\end{proof}

\begin{cl}\label{cl:4}
If $A$ can be partitioned into two weakly connected spanning subgraphs such that every vertex $u\in U-r$ has positive in-degree in both of them, then the edge-set of $G$ can be partitioned into two rainbow spanning trees.
\end{cl}
\begin{proof}
Take a partitioning of $A$ into two weakly connected spanning subgraphs $F_1$ and $F_2$ such that every vertex $u\in U-r$ has positive in-degree in both of them. For each color class $c\in \mathcal{C}$, the vertex $w_c$ is connected to the rest of $D$ through two outgoing arcs, hence one of them is contained in $F_1$ while the other is contained in $F_2$. This implies two things: the set $T_i\subseteq E$ of original edges whose images are contained in $F_i$ form a connected subgraph of $G$ for $i=1,2$, and the images of the two edges in color class $c$ are contained in different $F_i$s due to the in-degree constraints, hence the same holds for their original counterpart in $E$ with the $T_i$s. As $|E|=2\cdot |V|-2$, $T_1$ and $T_2$ necessarily partitions the edge-set of $G$ into rainbow spanning trees as required.
\end{proof}

The theorem follows by Theorem~\ref{thm:rainbow} and Claims~\ref{cl:3} and~\ref{cl:4}.
\end{proof}

%%%%%%%%%%%%%%%%%%%%%%%%
\subsection{Packing parity spanning trees}
\label{sec:parity}
%%%%%%%%%%%%%%%%%%%%%%%%

In the parity base packing problem we are given a matroid $M=(E,\mathcal{I})$ whose ground-set is partitioned into two-element subsets, and the goal is to find pairwise disjoint parity bases of $M$. The problem was shown to be hard in~\cite{berczi2021complexity}, which in turn implied the hardness of packing disjoint common bases of two matroids. However, the proof there did not settle the complexity of packing parity bases in graphic matroids, which remained an interesting open problem. 

Now we close this gap by showing that the problem remains hard in graphic matroids. The reduction is from {\sc Packing Rainbow Spanning Trees}; note that the two problems are orthogonal in the sense that the members of a pair should be contained in the same spanning tree in {\sc Packing Parity Spanning Trees}, while in different spanning trees in {\sc Packing Rainbow Spanning Trees}. 

\begin{thm}\label{thm:parity}
{\sc Packing Parity Spanning Trees} is NP-complete even when $k=2$ and the graph is the union of two spanning trees.
\end{thm}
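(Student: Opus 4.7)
The plan is to reduce from {\sc Packing Rainbow Spanning Trees} with $k=2$, restricted to graphs that are the union of two spanning trees and have all color classes of size exactly two; this restricted problem is NP-complete by Theorem~\ref{thm:rainbow}. Membership in NP is immediate, so only hardness needs attention. Since the rainbow condition on a color class $\{e_1,e_2\}$ says ``$e_1$ and $e_2$ lie in \emph{different} trees'' while the parity condition on a pair says ``both edges lie in the \emph{same} tree'', I need a local gadget that flips one into the other.

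The construction I have in mind attaches a tiny pendant gadget to each color class. Given a rainbow instance $G=(V,E)$ with color classes $\mathcal{C}$, for each $c=\{e_1,e_2\}$ with $e_i=u_iv_i$ I introduce a new vertex $x_c$ and two new edges $f_i^c=x_cu_i$ ($i=1,2$), and I redefine the pairs of the new graph $G'$ to be $\{e_1,f_1^c\}$ and $\{e_2,f_2^c\}$, discarding the original pair $\{e_1,e_2\}$. A quick count using $|E|=2|V|-2$ and $|\mathcal{C}|=|V|-1$ gives $|V'|=2|V|-1$ and $|E'|=4|V|-4=2(|V'|-1)$. Taking any decomposition of $E$ into two spanning trees of $G$ and assigning $f_1^c$ to the first and $f_2^c$ to the second exhibits $G'$ as the union of two spanning trees (each $x_c$ becomes a leaf), so the restricted hypotheses of the target problem hold by construction.

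For the forward direction, given a rainbow packing $T_1,T_2$ of $G$, for each class $c$ with $e_1\in T_1,\ e_2\in T_2$ I route $f_1^c$ into $T_1'$ and $f_2^c$ into $T_2'$; this satisfies parity by construction and attaches $x_c$ as a leaf in each tree, so both $T_i'$ remain spanning trees. For the reverse direction, given a parity packing $T_1',T_2'$ of $G'$, the pendant $x_c$ has only the two edges $f_1^c,f_2^c$ incident to it in $G'$; since $T_1',T_2'$ partition $E'$ and both must cover $x_c$, each $T_i'$ contains exactly one of $\{f_1^c,f_2^c\}$. The parity pair $\{e_j,f_j^c\}$ then forces $e_1$ and $e_2$ into opposite trees, so each color class meets each $T_i':=T_i'\cap E$ exactly once; pruning the pendant leaves shows that these restrictions are spanning trees of $G$, yielding a rainbow packing.

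The only subtlety I anticipate is in the reverse direction, where one might worry that the freedom in choosing which pendant edges to use could satisfy parity without producing a genuine rainbow packing; however, the counting argument above — each $x_c$ has degree two in $G'$ and must be reached by both trees — pins down the pendant behavior uniquely and chains through the parity pairs to the original edges without further case work, so I do not expect any real obstacle beyond verifying these simple bookkeeping facts.
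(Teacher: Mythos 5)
Your construction is exactly the paper's: a new degree-two vertex per color class, joined to one endpoint of each of its two edges, with the pairs redefined as \{original edge, its pendant edge\}; your forward and reverse arguments (pendant vertex forces one new edge into each tree, parity then forces the two original edges into opposite trees) are the same equivalence the paper asserts. The proposal is correct and matches the paper's proof, just with the bookkeeping spelled out in more detail.
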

\begin{proof}
Consider an instance of {\sc Packing Rainbow Spanning Trees} where the graph $G=(V,E)$ is the union of two spanning trees, and each color class contains exactly two edges. For ease of discussion, we denote the set of colors by $\mathcal{C}$. We construct an instance $G'=(V',E')$ of {\sc Packing Parity Spanning Trees} as follows.

We extend $G$ by adding a new vertex $w_c$ for each color class $c\in\mathcal{C}$. Furthermore, if the color class $c$ contains the edges $e,f$, then we add an edge $e_c$ between $w_c$ and one of the end vertices of $e$, and an edge $f_c$ between $w_c$ and one of the end vertices of $f$. Finally, we define the pairs of $e$ and $f$ to be $e_c$ and $f_c$, respectively; see Figure~\ref{fig:parity} for an example. Let $G'=(V',E')$ denote the graph thus obtained. Note that $V\subseteq V'$ and $E\subseteq E'$. 

\begin{figure}[t!]
\centering
\begin{subfigure}[t]{0.48\textwidth}
  \centering
  \includegraphics[width=.55\linewidth]{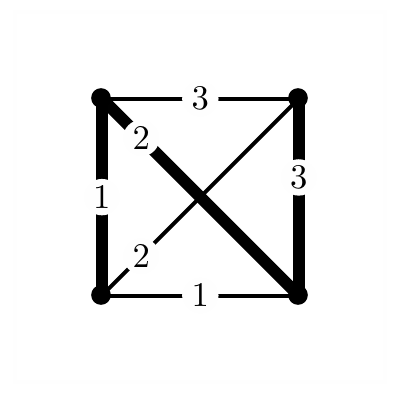}
  \caption{An instance of {\sc Packing Rainbow Spanning Trees}. Color classes are represented by numbers. Thick and thin edges form rainbow spanning trees $T_1$ and $T_2$, respectively.}
  \label{fig:par1}
\end{subfigure}\hfill
\begin{subfigure}[t]{0.48\textwidth}
  \centering
  \includegraphics[width=.55\linewidth]{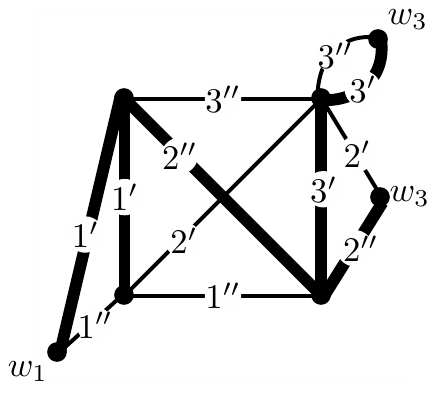}
  \caption{The corresponding {\sc Packing Parity Spanning Trees} instance $G'=(V',E')$. Edges in the same pair share the same number. Thick and thin edges form parity spanning trees $T'_1$ and $T'_2$.}
  \label{fig:par2}
\end{subfigure}
\caption{An illustration of Theorem~\ref{thm:parity}.}
\label{fig:parity}
\end{figure}

Note that since $G$ is the union of two spanning trees, the same holds for $G'$. Furthermore, it is obvious to check that $T'_1$ and $T'_2$ give a partitioning of the edge-set of $G'$ into two parity spanning trees if and only if $T_1\coloneqq T'_1\cap E$ and $T_2\coloneqq T'_2\cap E$ give a partitioning of the edge-set of $E$ into two rainbow spanning trees, hence the theorem follows.
\end{proof}

%%%%%%%%%%%%%%%%%%%%%%%%
\section*{Acknowledgement} The work was supported by the Lend\"ulet Programme of the Hungarian Academy of Sciences -- grant number LP2021-1/2021 and by the Hungarian National Research, Development and Innovation Office -- NKFIH, grant numbers FK128673 and TKP2020-NKA-06.
%%%%%%%%%%%%%%%%%%%%%%%%

\bibliographystyle{abbrv}
\bibliography{packing_rainbow_trees}

\end{document}